\theoremstyle{plain}
\newtheorem{theorem}{Theorem}[section]
\newtheorem*{theorem*}{Theorem}
\newtheorem{lemma}[theorem]{Lemma}
\newtheorem{proposition}[theorem]{Proposition}
\newtheorem*{proposition*}{Proposition}
\newtheorem*{conjecture*}{Conjecture}
\theoremstyle{definition}
\newtheorem{definition}[theorem]{Definition}
\theoremstyle{remark}
\newtheorem{remark}[theorem]{Remark}
\DeclareMathOperator{\supp}{supp}
\newcommand{\abs}[1]{\left\lvert #1 \right\rvert}
\newcommand{\norm}[1]{\left\lVert #1 \right\rVert}
\newcommand{\R}{\mathbb{R}}
\newcommand{\C}{\mathbb{C}}
\renewcommand{\eqref}[1]{\textnormal{(\ref{#1})}}
\numberwithin{equation}{section}
\title[Recovery by a single far-field pattern]{Recovering piecewise constant refractive indices by a single far-field pattern}
\author{Emilia Bl{\aa}sten}
\address{Department of Mathematics, University of Helsinki, Helsinki, Finland}
\email{emilia.blasten@iki.fi}
\author{Hongyu Liu}
\address{Department of Mathematics, City University of Hong Kong, Kowloon, Hong Kong SAR, China.}
\email{hongyu.liuip@gmail.com; hongyliu@cityu.edu.hk}
\begin{document}

\begin{abstract}

We are concerned with the inverse scattering problem of recovering an
inhomogeneous medium by the associated acoustic wave measurement. We
prove that under certain assumptions, a single far-field pattern
determines the values of a perturbation to the refractive index on the
corners of its support. These assumptions are satisfied for example in
the low acoustic frequency regime. As a consequence if the
perturbation is piecewise constant with either a polyhedral nest
geometry or a known polyhedral cell geometry, such as a pixel or voxel
array, we establish the injectivity of the perturbation to far-field
map given a fixed incident wave. This is the first unique determinancy
result of its type in the literature, and all of the existing results
essentially make use of infinitely many measurements.

  \medskip 
  \noindent{\bf Keywords}: inverse medium scattering; uniqueness;
  single far-field pattern; value at corner; piecewise constant;
  polyhedral.

  \medskip
  \noindent{\bf Mathematics Subject Classification (2010)}: 35P25, 58J50, 35R30, 81V80
  
\end{abstract}

\maketitle

\section{Introduction}\label{sec:Intro}

\subsection{Mathematical setup}
Let $V\in L^\infty(\mathbb{R}^n)$, $n=2,3$, be a bounded measurable
complex-valued function. Let $\Omega$ be a bounded domain in $\R^n$
with a connected complement $\R^n\setminus\overline{\Omega}$. Assume
that $\supp(V)\subset\overline{\Omega}$. Physically speaking, $V$ is
the material parameter of an inhomogeneous acoustic medium supported
on $\Omega$, with $\Re V$ related to the refractive index and $\Im V$
related to the energy loss into the medium. In what follows, we simply
call $V$ the refractive index of the medium. We let $u^i$ be an entire
solution to the Helmholtz equation,
\begin{equation}\label{eq:Helm}
  (\Delta+k^2)u^i=0\quad\mbox{in}\ \ \ \mathbb{R}^n.
\end{equation} 
Consider the following scattering system for $u\in H_{loc}^1(\R^n)$,
\begin{equation}\label{eq:Helm1}
  \begin{cases}
    \big(\Delta + k^2(1+V)\big) u =
    0\quad\mbox{in}\ \ \R^n,\medskip\\ \displaystyle{
     \lim_{r:=|x|\rightarrow+\infty} \abs{x}^{\frac{n-1}{2}}
      \big(\partial_r - ik\big) (u-u^i) = 0},
  \end{cases}
\end{equation}
where $\partial_r$ is the derivative along the radial direction from
the origin.  The last limit in \eqref{eq:Helm1} is known as the
Sommerfeld radiation condition and it holds uniformly with respect to
the angular variable $\hat x:=x/|x|\in\mathbb{S}^{n-1}$ as
$r\to+\infty$. The radiation condition implies the existence of a
far-field pattern. More precisely there is a real-analytic function on
the unit-sphere at infinity $A_{u^i}: \mathbb{S}^{n-1}\to\C$ such that
\begin{equation}\label{eq:Helm3}
  u(r\hat x) = u^i(r\hat x) + \frac{e^{ikr}}{r^{(n-1)/2}} A_{u^i}(\hat
  x) + \mathcal{O} \Big( \frac{1}{r^{(n+1)/2}} \Big),
\end{equation}
which holds uniformly along the angular variable $\hat x$.
  
In the physical scenario, \eqref{eq:Helm1} describes time-harmonic
acoustic scattering due to the presence of an incident wave field
$u^i$ and an inhomogeneous medium supported on $\Omega$. The functions
$u$ and $u^s:=u-u^i$, respectively, signify the total and scattered
wave fields. $A_{u^i}$ can be measured by a physical apparatus, and it
encodes the information of the scattering medium $V$. An important
inverse problem arising in practical applications is to recover $V$
from the knowledge of $A_{u^i}(\hat x)$. This problem lies at the core
of many areas of science and technology including radar and sonar,
geophysical exploration and medical imaging. In this paper, we are
mainly concerned with the unique recovery issue of this inverse
problem. That is, given a certain set of measurement data, we shall
show what kind of unknowns one can recover; or in other words, given
the class of the unknowns, what kind of measurement data can ensure
the successful recovery. If one introduces an operator $F$, which
sends the unknown $V$ to the associated far-field pattern $A_{u^i}$:
\begin{equation}\label{eq:ip1}
  F(V)=A_{u^i},
\end{equation}
where $F$ is defined by the scattering system \eqref{eq:Helm1}, then
it can readily be shown that $F$ is nonlinear. Hence, the unique
recovery result can also ensure the global existence of a unique
solution to the nonlinear inverse problem \eqref{eq:ip1}. In this
paper, we are particularly interested in the case with minimal
measurement data: a single far-field pattern. By a single far-field
pattern, we mean that $A_{u^i}(\hat x)$ is given for all $\hat
x\in\mathbb{S}^{n-1}$ and a single fixed incident wave $u^i$. On the
other hand, we note that if $A_{u^i}(\hat x)$ is given for $\hat x$
from any open patch of $\mathbb{S}^{n-1}$, then by analytic
continuation, it is known for all $\hat x\in\mathbb{S}^{n-1}$.
  
The main result that we aim to establish is that if the medium $V$ is
piecewise constant within two general polyhedral geometries, then one
can uniquely recover it by a suitable single far-field measurement.

\subsection{Connection to existing studies and discussions}
For a general medium parameter $V\in L^\infty(\Omega)$, the most
widely known unique recovery results in the inverse scattering
community makes use of infinitely many far-field patterns; see
\cite{CK,Isa} for convenient references. Here, by infinitely many
far-field patterns, we mean $A_{u^i}$ are given corresponding to
incident plane waves $u^i=e^{ikx\cdot d}$ with all possible
$d\in\mathbb{S}^{n-1}$ and a fixed $k\in\mathbb{R}_+$. A crucial
ingredient is that the data set $\{A_{e^{ikx\cdot d}}(\hat x)\}_{(\hat
  x, d)\in\mathbb{S}^{n-1}\times\mathbb{S}^{n-1}}$ is equivalent to
the Cauchy data set, $\mathcal{C}_V:=\{(u|_{\partial\Omega},
\partial_\nu u|_{\partial\Omega}); \big(\Delta+k^2(1+V)\big)u = 0
\mbox{ in } \Omega\}$, where $\nu\in\mathbb{S}^{n-1}$ is the exterior
unit normal vector to $\partial\Omega$, cf. \cite{Nachman88,Uhl1}.
Hence, in the case with infinitely many measurement, the study of the
inverse scattering problem of recovering $V$ can be reduced to the
study of the inverse boundary value problem of recovering $V$ from the
associated Cauchy data set. The establishment of the unique recovery
result for the aforementioned inverse boundary value problem is mainly
based on the Sylvester-Uhlmann method pioneered in
\cite{Sylvester--Uhlmann}, and on the Bukhgeim method in two
dimensions \cite{Buk}. We also refer to a recent survey paper
\cite{Uhl0} for many subsequent relevant developments.

The problem of recovering a potential from the Cauchy data is formally
overdetermined in three and higher dimensions. This manifests in a
much tardier solution to the problem in the plane \cite{Buk} than in
higher dimensions \cite{Sylvester--Uhlmann}. In a similar vein, the
conjectured integrability limit of $L^{n/2}$ for potential recovery
\cite{Jerison--Kenig} has been achieved in the formally overdetermined
case \cite{Ferreira--Kenig--Salo}, but the two dimensional formally
determined case is lagging behind \cite{BTW17,BOY}. Also, the formally
determined \emph{backscattering problem} is still largely open, but
with recent progress on an admissible class of potentials
\cite{RU,RU2}. In two recent articles by one of the authors
\cite{HLL,LiuLiu}, it is shown that if the medium $V$ is from a
certain special class and contains an impenetrable obstacle, then
$\{A_{e^{ikx\cdot d}}(\hat x)\}_{(\hat x,
  k)\in\mathbb{S}^{n-1}\times\mathbb{R}_+}$ with a fixed
$d\in\mathbb{S}^{n-1}$ and $k$ in an interval is enough measurement
data. This problem is formally determined. The problem of our paper
--- the unique determination of a potential by a single far-field
measurement --- is formally underdetermined, so stands no chance of
being completely solved. However by restricting the potentials into a
space that's still useful with respect to applications, but has fewer
degrees of freedom, this problem is avoided.

Making use of the corner scattering results in \cite{BPS,BLLW,BL2017,EH1}, the
authors in \cite{BL2016,HSV} showed that if a medium $V$ is supported on
a convex polyhedral domain $\Omega$, then $\Omega$ can be uniquely
determined (in a stable way) by a single far-field pattern. Earlier work on \emph{the
  scattering support} also recover useful information from a single
far-field pattern \cite{Kusiak--Sylvester}. In this article we show
that not only the domain, but also the values of the potential can be
uniquely determined at the vertices by a far-field pattern created by
a suitable incident wave. As a consequence, we are able to establish a
much more general result in the same class of measurements:
determining a polyhedral piecewise-constant medium parameter by a
single far-field pattern.

If the medium is piecewise constant on a suitable known polyhedral
grid, for example a pixel or voxel array, then one can uniquely
recover it by a single far-field pattern. If the grid is not known
a-priori, then we can recover the potential assuming that it is
piecewise constant on a \emph{nested polyhedral grid}. To the best of
our knowledge, this is the first uniqueness result of its type in the
literature.

There is one important implication of our uniqueness result,
particularly from the numerical point of view. The finite element
method (FEM) is widely used in the engineering community. Using the FEM approximation, the medium
parameter $V$ is usually approximated by a piecewise polynomial
function with a polyhedral triangulation of its support. That is,
\begin{equation}\label{eq:fem1}
V\approx V_h:=\sum_{\Sigma_j\in\mathcal{T}_h} P_j \chi_{\Sigma_j},
\end{equation}
where $\mathcal{T}_h$ is a polyhedral triangulation of $\Omega$ and $P_j$ is a polynomial function supported in $\Sigma_j$, a.k.a 
a finite element basis function. Here, $h\in\mathbb{R}_+$ signifies the mesh size of the FEM approximation. Hence, under the approximation \eqref{eq:fem1}, instead of solving \eqref{eq:ip1}, one actually solves
\begin{equation}\label{eq:ip1app}
F(V_h)=A_{u^i}(V_h)+\epsilon_h,\quad \epsilon_h:=A_{u^i}(V)-A_{u^i}(V_h). 
\end{equation}
Clearly, by
using our uniqueness result, if the approximation is made with a
piecewise constant function associated to a certain polyhedral
triangulation as discussed in this work, there is a one-to-one correspondence between $V_h$ and $A_{u^i}(V_h)$ through the far-field map $F$, which shall be of critical importance in 
analyzing the convergence of the FEM approximation \eqref{eq:ip1app} . 
Hence, the theoretical results obtained in the current article might
motivate some novel numerical and practical applications, which are definitely worth further investigation.

We are also led to the following conjecture in inverse scattering
theory: with one or a few incident plane waves, if two mediums $V$ and
$V'$ of certain type are different, then the associated far-field
patterns should be distinguishable from each other. At least,
according to the current article, the conjecture is true for piecewise
constant mediums supported in certain polyhedral domains. However it
is known to be false for some radially symmetric mediums
\cite{CM,CPS}. We believe that the conjecture would hold for a general
class of mediums, say, with piecewise polynomial material
parameters. We shall investigate this and other interesting issues in
our future work.

\subsection{Improvements to the method}
Finally, we would like to comment on the mathematical argument in
deriving our uniqueness results. In addition to the completely new
potential determination theorems, we improve past corner scattering
methods. The first step is to show that not only can the vertices of
the scatterer be recovered, but also the value of the potential on
these points. This is an extremely useful improvement over past
determination results \cite{BL2016,HSV,EH1}. For this we will have to use
the total wave of one of the potentials as an incident wave for the
other one. Technically we have to use the Taylor expansion of such a
non-smooth wave. This difficulty is avoided by making sure that the
total wave does not vanish at vertices.

A significant technical improvement to the corner scattering method is
the following: our results will now work for arbitrary convex
polyhedrons in 3D, hence improving the recovery results of
\cite{BL2016,HSV} where the three dimensional objects had to be
rectangular boxes. This generalization is made possible by studying
more carefully the Laplace transforms of polyhedral cones, as inspired
by \cite{ABR}, and works when the total wave is nonzero at the
corner. See Lemma \ref{Laplace}. We leave for future work the
technically much more challenging problem of dealing with total waves
that vanish to arbitrarily high order at some of the vertices of the
scatterers. 

Once the corners and values at the corners are recovered, this means
that we know the piecewise constant potential in a neighbourhood of
its corners. To progress further we need to propagate the equality of
the total waves of the two potentials. This is done by an application
of Holmgren's uniqueness theorem. As an aside, what we prove implies a
variation of Holmgren's theorem in the setting of two equations,
$(\Delta+q)u=0$ and $(\Delta+q')u'=0$: if we know that $u=u'$ and
$\partial_\nu u = \partial_\nu u'$ on the boundary near a corner, then
under the assumptions of our theorems we have $u=u$ inside, and also
$q=q'$ at the corner.

We state three theorems in this paper. One is the unique recovery of
shape and values at the vertices of a polyhedral scatterer. The second
one is the complete recovery of a piecewise constant potential defined
on an a-priori known grid of polyhedral cells. Think of pixels for
example. The last theorem shows a case where even the cell structure
is not known beforehand, however we must have the knowledge that the
potentials have a nested geometry. These are not the ultimate limits
of this method, however it is still unclear how far can one go.

The rest of the paper is organized as follows. In Section 2, we
present the main uniqueness results. Section 3 is devoted to the
proofs.

\section{Main results}

\subsection{Geometric setup}
We shall consider the inverse problem \eqref{eq:ip1} with a piecewise
constant refractive index of the following form
\begin{equation}\label{eq:pw1}
  V=\sum_j V_j \chi_{\Sigma_j}, \quad\overline{\Omega} = \bigcup_j
  \overline{\Sigma_j},
\end{equation}
where $V_j\in\C$ are constants, and $\Sigma_j$ are mutually disjoint
bounded open subsets in $\mathbb{R}^n$. For completeness we also
recover the corner value of potentials that are H\"older-continuous
near the vertices on a polygonal domain.

In the piecewise constant case, we shall consider two geometric setups
of the subdomains $\Sigma_j$ that can be described as follows. The
first one is referred to as the polyhedral cell geometry, see
Figure~\ref{fig:geo}.(A) for a schematic illustration.
\begin{definition}
  An \emph{admissible cell} $P\subset\R^n$ is a bounded open convex
  polytope, i.e. a polygon in 2D and a polyhedron in 3D.
\end{definition}
\begin{definition} \label{cellDef}
  For $j\in\mathbb Z_+$ let each of $\Sigma_j\subset\R^n$ be an
  admissible cell or the empty set, $\cup_j \overline{\Sigma_j}$ is
  simply connected, bounded and $\Sigma_j \cap \Sigma_k = \emptyset$
  if $j\neq k$. A bounded potential $V \in L^\infty(\R^n)$ is said to
  be \emph{piecewise constant with polyhedral cell geometry} if
  \begin{enumerate}
    \item there are constants $V_j\in\C$ such that
      \[
      V(x) = \sum_{j=1}^\infty V_j \chi_{\Sigma_j}(x),
      \]
      with $V_j=0$ if $\Sigma_j=\emptyset$, and
    \item each $\Sigma_j\neq\emptyset$ has a vertex that can be
      connected to infinity by a path that stays distance $d\geq
      d_0>0$ from any $\Sigma_k$ with $k>j$.
  \end{enumerate}
\end{definition}
The latter condition is satisfied for example by taking any finite
number of unit squares or cubes that are part of a lattice, and
ordering them from left to right and top to bottom. In essence if we
approximate a potential's graph by a discrete picture consisting of
pixels or voxels, then this approximation represents a piecewise
constant potential with polyhedral cell geometry.

\begin{figure}[t]
  \centering
  \begin{subfigure}[b]{0.37\textwidth}
    \includegraphics{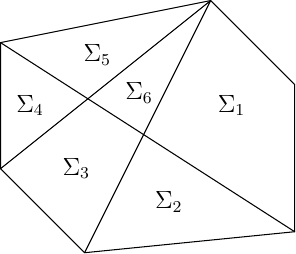}
    \caption{}
  \end{subfigure}
  \qquad\qquad
  \begin{subfigure}[b]{0.37\textwidth}
    \includegraphics{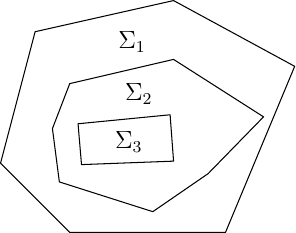}
    \caption{}
  \end{subfigure} 
  \caption{Schematic illustration of the two polyhedral geometries in
    two dimensions. (A) Polyhedral cell geometry; (B) Polyhedral
    nested geometry.}
  \label{fig:geo}
\end{figure}

The second one is referred to as the polyhedral nested geometry.  See
Figure~\ref{fig:geo}.(B) for a schematic illustration.
\begin{definition} \label{nestedDef}
  For $j\in\mathbb Z_+$ let each of $D_j\subset\R^n$ be an admissible
  cell or the empty set, and
  \[
  D_j \Supset D_{j+1}.
  \]
  A bounded potential $V \in L^\infty(\R^n)$ is said to be
  \emph{piecewise constant with polyhedral nested geometry} if there
  are constants $V_j\in\C$, with $V_1\neq0$, $V_{j+1}\neq V_j$ such
  that
  \[
  V(x) = \sum_{j=1}^\infty V_j \chi_{\Sigma_j}(x)
  \]
  where $\Sigma_j = D_j \setminus \overline{D_{j+1}}$.
\end{definition}

Finally we define potentials analogous to ones used in previous corner
scattering results \cite{PSV,HSV}.

\begin{definition} \label{nonConstantDef}
  A potential $V \in L^\infty$ is \emph{a non-constant admissible
    potential} if there is an admissible cell $P\subset\R^n$ and
  bounded function $\varphi\in L^\infty(\R^n)$ such that $V = \chi_P
  \varphi$. Moreover we require that $\varphi$ be H\"older
  $C^\alpha$-continuous in a neighbourhood of each of the vertices of
  $P$ with $\alpha>0$ in 2D and $\alpha>1/4$ in 3D. Finally, the
  function $\varphi$ must not vanish at any of the vertices.
\end{definition}

\subsection{Unique recovery results} 

Our results depend on the total field $u$ not vanishing at the
vertices of the various polytopes $\Sigma_j$. The nodal (or vanishing)
set cannot be too large in general. For concreteness Lemma
\ref{emptyNodal} gives a sufficient condition: the nodal set is empty
for low enough frequencies with incident plane-waves; see Remark~\ref{rem:new1} for more relevant discussion.

\begin{theorem} \label{nonConstantThm}
  Let $n\in\{2,3\}$, $k>0$ and $V=\chi_P\varphi, V'=\chi_{P'}\varphi'$
  be two non-constant admissible potentials.

  Let $u^i$ be an incident wave such that $u(x_c)\neq0$ or
  $u'(x_c)\neq0$ for the total waves $u,u'$ at each vertex $x_c$ of
  $P$ or $P'$. Assume that
  \[
  A_{u^i} = A'_{u^i}
  \]
  for the far-field patterns arising from $V$ and $V'$, respectively.
  Then $P=P'$ and $\varphi(x_c)=\varphi'(x_c)$ on each vertex $x_c$ of
  $P=P'$.
\end{theorem}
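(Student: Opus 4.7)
The plan is to use the corner scattering method in two successive stages: first to prove $P=P'$, then to extract $\varphi(x_c)=\varphi'(x_c)$ at each vertex. Both stages rest on the same mechanism, namely testing a Helmholtz-type equation against a family of CGO solutions that concentrate exponentially inside a convex polyhedral cone. The starting observation is that $A_{u^i}=A'_{u^i}$ combined with Rellich's lemma and unique continuation forces $u=u'$ in the unbounded component of $\R^n\setminus\overline{P\cup P'}$; moreover $V,V'\in L^\infty$ together with elliptic regularity places $u,u'\in H^2_{loc}$, so the Dirichlet and Neumann traces of $w:=u-u'$ on the various polyhedral faces are well defined and continuous across them.

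For the first stage, suppose $P\neq P'$. A standard separating-hyperplane argument for bounded convex polytopes yields, after possibly interchanging the roles of $V$ and $V'$, a vertex $x_c$ of $P$ with $x_c\notin\overline{P'}$. Fix a small ball $B=B(x_c,r)$ disjoint from $\overline{P'}$ such that $P\cap B=(x_c+\cone)\cap B$ for the open convex cone $\cone$ of $P$ at $x_c$. On $B$, $V'\equiv 0$ so $u'$ is real-analytic, $(\Delta+k^2)u=-k^2\chi_P\varphi\,u$, and $w$ vanishes together with $\partial_\nu w$ on $\partial P\cap B$. Introduce the CGO test function $\phi_\tau(x)=e^{\rho\cdot(x-x_c)}$ with $\rho=\rho(\tau)\in\C^n$ satisfying $\rho\cdot\rho=-k^2$ and $\Re\rho$ deep inside the (interior of the) dual cone of $\cone$, so that $|\phi_\tau(x)|\leq e^{-c\tau|x-x_c|}$ on $\cone$. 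Green's identity then collapses to
\[
-k^2\int_{P\cap B}\varphi u\,\phi_\tau\,dx \;=\; \int_{P\cap\partial B}\bigl(\phi_\tau\partial_\nu w-w\partial_\nu\phi_\tau\bigr)dS \;=\; O\bigl(e^{-c\tau r}\bigr).
\]
Writing $\varphi u=\varphi(x_c)u(x_c)+R$ with $|R(x)|\lesssim|x-x_c|^\alpha$ from the H\"older continuity of $\varphi$ and the interior regularity of $u$, Lemma \ref{Laplace} yields that the left-hand side equals a nonzero multiple of $\varphi(x_c)u(x_c)\,\tau^{-n}$ plus a remainder $O(\tau^{-n-\alpha})$. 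Since the right-hand side is exponentially small, we must have $\varphi(x_c)u(x_c)=0$. But $u(x_c)=u'(x_c)$ by continuity and the exterior matching of the total fields, at least one of them is nonzero by hypothesis, and $\varphi(x_c)\neq 0$ by Definition \ref{nonConstantDef}, a contradiction. Hence $P=P'$.

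For the second stage, fix a vertex $x_c$ of $P=P'$ and repeat with $w=u-u'$ now satisfying
\[
(\Delta+k^2)w \;=\; -k^2\bigl(\varphi u-\varphi' u'\bigr) \quad\text{in } B\cap P,
\]
still with $w$ and $\partial_\nu w$ vanishing on $\partial P\cap B$ by $H^2$-regularity and the exterior matching. Expanding the source around $x_c$, the leading term is $-k^2(\varphi(x_c)-\varphi'(x_c))u(x_c)$ and the remainder is $O(|x-x_c|^\alpha)$, where the $\varphi' w$ contribution is absorbed using $w(x_c)=0$ and the Lipschitz bound $|w(x)|\lesssim|x-x_c|$ inherited from $u,u'\in H^2$. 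The same CGO argument extracts a leading contribution of $(\varphi(x_c)-\varphi'(x_c))u(x_c)\cdot c(\cone)\tau^{-n}$ against exponentially small boundary terms and subdominant remainders; since $u(x_c)=u'(x_c)\neq 0$, the balance forces $\varphi(x_c)=\varphi'(x_c)$.

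The principal obstacle is the remainder analysis underpinning Lemma \ref{Laplace}: one must compute (or at least lower-bound) the Laplace transform $\int_\cone \phi_\tau$ for an arbitrary convex polyhedral cone in $\R^3$ and show that its $\tau^{-n}$ asymptotic strictly dominates the H\"older remainder. This is precisely where earlier works in the literature were constrained to rectangular cones, and where the threshold $\alpha>1/4$ in 3D originates. A secondary technical point is verifying $w=\partial_\nu w=0$ across $\partial P\cap B$, which requires the $H^2$-traces of $u$ and $u'$ and the equality $u=u'$ in the exterior region; this is standard but must be done carefully at the vertex itself, where $w\in C^{0,1}$ and both traces vanish because they are computed from the exterior side where $w\equiv 0$.
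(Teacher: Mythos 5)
Your proposal is correct in substance and shares the paper's architecture: Rellich plus unique continuation gives $u=u'$ outside $P\cup P'$; a corner test at a vertex of $P$ lying outside $\overline{P'}$ forces $P=P'$ (the paper packages this as Proposition~\ref{equalCells}); a second corner test at each common vertex gives $\varphi(x_c)=\varphi'(x_c)$ (the paper's Lemma~\ref{boundary2corner}). The genuine difference is the test function. The paper applies the Alessandrini-type identity of Lemma~\ref{orthogonality} with a complex geometrical optics solution $u_0=e^{\rho\cdot x}(1+\psi)$ of the \emph{perturbed} equation $(\Delta+q)u_0=0$, $\rho\cdot\rho=0$; the correction $\psi$ is precisely where the hypothesis $\alpha>1/4$ in 3D enters and it contributes an $\abs{\Re\rho}^{-n-\delta}$ error. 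You instead test against the exact free solution $e^{\rho\cdot(x-x_c)}$ with $\rho\cdot\rho=-k^2$, which eliminates $\psi$ but leaves an extra source term ($-k^2\varphi' w$ in your second stage), controlled via $w(x_c)=0$ and the H\"older continuity of $w$. This trade is legitimate and more elementary, and it is closer in spirit to \cite{EH1} than to \cite{PSV,HSV}. Two points need repair, neither fatal: (i) $u,u'\in H^2$ embeds into $C^{0,1/2}$ in 3D, not into Lipschitz, so the correct bound is $\abs{w(x)}\lesssim\abs{x-x_c}^{1/2}$ — still enough to make that term $O(\tau^{-n-1/2})$, subdominant to the $\tau^{-n}$ main term; (ii) Lemma~\ref{Laplace} produces a single $\rho$ with $\rho\cdot\rho=0$, and the paper obtains the lower bound $\abs{\int e^{\rho\cdot x}dx}\geq C\abs{\Re\rho}^{-n}$ by scaling that $\rho$ by real numbers, a family that does not preserve your constraint $\rho\cdot\rho=-k^2$; the splitting argument of the lemma must be rerun for the hyperbola $\rho\cdot\rho=-k^2$ (asymptotically identical, but you cannot cite the lemma verbatim as you do). You correctly identify the Laplace transform of a general convex polyhedral cone in 3D as the crux; with the two adjustments above your route closes, and it would in fact dispense with the $\alpha>1/4$ restriction that the paper inherits from its CGO construction.
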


\begin{theorem} \label{cellThm}
  Let $n\in\{2,3\}$ and $k>0$. Let $V$ and $V'$ be two piecewise
  constant potentials with common polyhedral cell geometry.

  Let $u^i$ be an incident wave such that $u(x_c)\neq0$ or
  $u'(x_c)\neq0$ for each vertex $x_c$ of the cells of $V$ and
  $V'$. If $A_{u^i} = A'_{u^i}$ then $V=V'$.
\end{theorem}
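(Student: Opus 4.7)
The plan is to peel off the cells one at a time using the ordering provided by Definition~\ref{cellDef}(2), establishing $V_j = V_j'$ at each step. Since $A_{u^i} = A'_{u^i}$, Rellich's lemma applied to $u - u'$ gives $u = u'$ on the unbounded connected component of $\R^n \setminus \bigcup_j \overline{\Sigma_j}$. I induct on $j$ with the hypothesis that $V_k = V_k'$ for every $k < j$ and that $u = u'$ on $U_j := \R^n \setminus \bigcup_{k \geq j}\overline{\Sigma_k}$; the base case is precisely Rellich.

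For the inductive step, let $x_c$ be the vertex of $\Sigma_j$ joined to infinity by a path staying at distance $\geq d_0$ from every $\Sigma_k$ with $k > j$, and pick $r > 0$ so that $B := B_r(x_c)$ misses each such later cell. On $B \setminus \overline{\Sigma_j}$ the potentials $V$ and $V'$ coincide: points there lie either outside $\bigcup_k \overline{\Sigma_k}$, where both vanish, or in some earlier cell $\Sigma_k$, $k<j$, where $V_k = V_k'$ by the inductive hypothesis. Moreover $\Sigma_j \cap B$ is an open convex polyhedral cone with apex $x_c$. Setting $w := u - u'$,
\[
  (\Delta + k^2(1 + V)) w = -k^2(V_j - V_j') \chi_{\Sigma_j}\, u' \ \text{ in } B, \qquad w \equiv 0 \ \text{ on } B \setminus \overline{\Sigma_j}.
\]
This is exactly the local corner configuration treated in the proof of Theorem~\ref{nonConstantThm}: the Laplace-transform analysis of Lemma~\ref{Laplace}, combined with H\"older regularity of $u'$ near $x_c$ (from elliptic regularity for the $L^\infty$-potential equation), forces $(V_j - V_j')\, u'(x_c) = 0$. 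Continuity of $w$ at $x_c$ (both $u$ and $u'$ belong to $H^2_{\mathrm{loc}}\hookrightarrow C^0$ in dimension $n\le 3$) and $w \equiv 0$ on the exterior piece give $u(x_c) = u'(x_c)$, so the hypothesis that one of them is nonzero forces $u'(x_c) \neq 0$, and hence $V_j = V_j'$.

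To upgrade $U_j$ to $U_{j+1}$ and close the induction, I propagate the identity $u = u'$ into $\Sigma_j$. With $V_j = V_j'$ now established, both $u$ and $u'$ satisfy inside $\Sigma_j$ the same constant-coefficient Helmholtz equation $\Delta v + k^2(1 + V_j) v = 0$; the $H^1$-transmission conditions across $\partial \Sigma_j$ (continuity of $v$ and $\partial_\nu v$) together with the inductive hypothesis on the $U_j$-side yield matching Cauchy data on an analytic face of $\partial \Sigma_j$ near $x_c$, and Holmgren's uniqueness theorem then propagates $u = u'$ throughout $\Sigma_j$. Iterating over $j$ yields $V = V'$. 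The step I expect to be most delicate is the corner-scattering one: the local analysis of Theorem~\ref{nonConstantThm} is written for a potential supported on a single admissible cell, whereas here $V$ and $V'$ carry additional cellular structure inside $B$. This extra structure is absorbed into the coefficient of the equation for $w$ because the contributions of $V$ and $V'$ agree there, but verifying that the H\"older regularity of $u'$ needed by the corner-scattering lemma survives the cellular structure of $V'$ inside $B$ is where the bookkeeping has to be done carefully.
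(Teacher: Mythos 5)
Your argument is correct and follows essentially the same route as the paper: Rellich plus unique continuation for the exterior, induction along the accessibility ordering of the cells, the corner-scattering step (packaged in the paper as Lemma~\ref{boundary2corner}) to obtain $V_j=V_j'$ at the reachable vertex, and Holmgren's theorem to propagate $u=u'$ into the cell. The worry you flag at the end is a non-issue: the corner lemma is applied locally with $\Omega=\Sigma_j$, where the potentials are constant (hence trivially H\"older) and $u'$ only needs the $H^2\hookrightarrow C^{1/2}$ regularity it has from interior elliptic estimates, so the cellular structure of $V'$ elsewhere in $B$ never enters.
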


\begin{theorem} \label{nestedThm}
  Let $n\in\{2,3\}$ and $k>0$. Let $V$ and $V'$ be two piecewise
  constant potentials with polyhedral nested geometry.

  Let $u^i$ be an incident wave such that $u(x_c)\neq0$ or
  $u'(x_c)\neq0$ for each vertex $x_c$ of the cells of $V$ and
  $V'$. If $A_{u^i} = A'_{u^i}$ then $V=V'$.
\end{theorem}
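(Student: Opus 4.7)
The plan is to establish $D_j = D'_j$ and $V_j = V'_j$ for every $j$ by strong induction on $j$, from which $V = V'$ will follow since the nested sequences must eventually terminate.

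For the base case $j = 1$ I would reduce to Theorem \ref{nonConstantThm}. Write $V = \chi_{D_1} \varphi$ with $\varphi := V|_{D_1}$; since $D_2 \Subset D_1$, every vertex $x_c$ of $D_1$ has a neighborhood in $D_1$ on which $\varphi \equiv V_1 \ne 0$, so $\varphi$ is trivially H\"older continuous and nonvanishing at every vertex, and $V$ is a non-constant admissible potential. The same is true of $V'$, and the hypothesis on $u, u'$ at vertices of $D_1$ or $D'_1$ is exactly what Theorem \ref{nonConstantThm} needs, so it yields $D_1 = D'_1$ and $V_1 = \varphi(x_c) = \varphi'(x_c) = V'_1$.

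For the inductive step, assume the conclusion up to level $j$. First I would propagate the identity $u = u'$ inward layer by layer. Rellich's lemma gives $u = u'$ on $\R^n \setminus \overline{D_1}$; then in each already-determined shell $D_{i-1} \setminus \overline{D_i}$ (with $D_0 := \R^n$ and $V_0 := 0$) both $u$ and $u'$ solve the analytic-coefficient equation $(\Delta + k^2(1+V_{i-1}))v = 0$, so iterated application of Holmgren's uniqueness theorem, using continuity of $H^1$-traces and their conormal derivatives across each $\partial D_i$, propagates equality up to $\partial D_j$. A final Holmgren application in the open set $D_j \setminus \overline{D_{j+1} \cup D'_{j+1}}$, which is connected because $D_{j+1}, D'_{j+1} \Subset D_j$ and where $V = V' = V_j$, gives $u = u'$ there as well. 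Now I would suppose for contradiction that $(D_{j+1}, V_{j+1}) \ne (D'_{j+1}, V'_{j+1})$. Exploiting convexity of the polytopes, in each remaining case one locates a vertex $x_c$ (of $D_{j+1}$, of $D'_{j+1}$, or of the common polytope in the equal-shape case) and a ball $B$ around it on which $V - V' = c\,\chi_P$ for a polyhedral cone $P$ with apex $x_c$ and a nonzero constant $c$ (one of $V_{j+1} - V_j$, $V_j - V'_{j+1}$, or $V_{j+1} - V'_{j+1}$), while $u \equiv u'$ on $B \setminus \overline{P}$ by the propagation just established. The difference $w = u - u'$ satisfies $(\Delta + k^2(1+V))w = -k^2(V - V') u'$, a corner-concentrated source with locally real-analytic amplitude $c u'$. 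Applying the corner-scattering machinery underlying Theorem \ref{nonConstantThm} (in particular Lemma \ref{Laplace}) at $x_c$ will force $u'(x_c) = 0$; continuity of $w$ together with $w \equiv 0$ on $B \setminus \overline{P}$ then forces $w(x_c) = 0$, hence $u(x_c) = 0$ as well, contradicting the standing hypothesis.

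The main obstacle I expect is this last step: the corner-scattering argument must operate at an \emph{interior} jump where both potentials equal the common nonzero constant $V_j$ on the exterior side of the corner, rather than at the outer boundary of a scatterer as in the proof of Theorem \ref{nonConstantThm}. What should make it go through is that the Laplace-transform asymptotics for a convex polyhedral cone encoded in Lemma \ref{Laplace} depend only on the jump $c$ of the coefficient at the vertex and on the regularity of $u'$ at $x_c$; the common background $V_j$ merely shifts the effective wavenumber in the Helmholtz operator. A secondary technical point is preserving connectedness at each inductive Holmgren step, which is guaranteed by $D_{j+1}, D'_{j+1} \Subset D_j$, and making sure that the Cauchy data propagation across each interface $\partial D_i$ is justified for $H^1_{\mathrm{loc}}$ solutions with piecewise-constant leading coefficient.
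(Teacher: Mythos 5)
Your proof is correct and follows essentially the same route as the paper: induction on the nesting depth, with Rellich/unique continuation giving $u=u'$ in the exterior, the corner-scattering argument at a vertex of the symmetric difference (or of the common polytope) forcing $D_{j+1}=D'_{j+1}$ and $V_{j+1}=V'_{j+1}$, and Holmgren's theorem pushing $u=u'$ one shell deeper --- exactly what the paper packages as Proposition~\ref{equalCells} together with Lemma~\ref{holmgren}. The two obstacles you anticipate are already absorbed by the paper's lemmas: Lemma~\ref{boundary2corner} only requires $q,q'$ to be H\"older on the inside of the cone and the Cauchy data to match on its boundary, so the constant background $V_j$ causes no difficulty, and the interface-regularity worry disappears because $u-u'=u^s-{u'}^s\in H^2_{loc}$ by Lemma~\ref{scatteredNorm}.
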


\section{Proofs}

We first present a useful lemma.
\begin{lemma} \label{scatteredNorm}
  Let $u\in H_{loc}^1(\mathbb{R}^n)$ be the solution to
  \eqref{eq:Helm1}. Suppose that $\Omega\subset B_R$, where $B_R$ is a
  central ball of radius $R\in\mathbb{R}_+$.  Then $u^s\in H^2(B_R)$
  and there exists $C_0\in\mathbb{R}_+$ such that when $k^2
  \norm{V}_\infty \leq C_0$,
  \begin{equation}\label{eq:est1}
    \norm{u^s}_{H^2(B_R)}\leq C k^2 \norm{V}_{L^\infty(\Omega)}
    \norm{u^i}_{L^2(\Omega)},
  \end{equation}
  where $C$ is a positive constant depending only on $R$.
\end{lemma}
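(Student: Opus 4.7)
The plan is to derive \eqref{eq:est1} via the Lippmann--Schwinger representation combined with a Neumann series argument, exploiting the smallness assumption $k^2 \norm{V}_\infty \leq C_0$, and then to upgrade to an $H^2$ bound using mapping properties of the Helmholtz volume potential.

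First I would set $u^s = u - u^i$ and use $(\Delta+k^2)u^i = 0$ together with the equation for $u$ to obtain $(\Delta + k^2) u^s = -k^2 V u$ in $\R^n$ along with the Sommerfeld radiation condition. Since $V$ is compactly supported in $\Omega \subset B_R$, convolution with the outgoing Helmholtz fundamental solution $\Phi_k$ yields the Lippmann--Schwinger identity
\[
u(x) = u^i(x) + k^2 \int_\Omega \Phi_k(x,y) V(y) u(y)\, \rmd y, \qquad x \in \R^n.
\]
Introduce the operator $T_V w(x) := k^2 \int_\Omega \Phi_k(x,y) V(y) w(y)\, \rmd y$ acting on $L^2(\Omega)$. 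Because $\Phi_k$ is only weakly singular ($\log\abs{x-y}$ in 2D, $\abs{x-y}^{-1}$ in 3D, with the rest of $\Phi_k$ bounded on $B_R\times B_R$ uniformly in $k$), a Schur-type estimate on $\Omega \subset B_R$ yields $\norm{T_V}_{L^2(\Omega)\to L^2(\Omega)} \leq C(R)\, k^2 \norm{V}_{L^\infty(\Omega)}$. Taking $C_0$ so that $C(R)C_0 \leq 1/2$ makes $I - T_V$ invertible on $L^2(\Omega)$ by Neumann series, whence
\[
\norm{u}_{L^2(\Omega)} \leq 2\, \norm{u^i}_{L^2(\Omega)}.
\]

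Second I would turn the integral representation $u^s = k^2\,\Phi_k * (Vu)$ on $B_R$ into an $H^2$ estimate. The source $-k^2 V u$ is compactly supported in $\Omega$ with $L^2$ norm bounded by $k^2 \norm{V}_\infty \norm{u}_{L^2(\Omega)}$. The standard $L^2 \to H^2_{loc}$ mapping property of the Helmholtz volume potential (equivalently, interior elliptic regularity applied to $(\Delta + k^2)u^s = -k^2 V u$ on a ball slightly larger than $B_R$, with the requisite $L^2$ bound on $u^s$ there read off from the same integral representation) gives
\[
\norm{u^s}_{H^2(B_R)} = k^2 \norm{\Phi_k * (Vu)}_{H^2(B_R)} \leq C(R)\, k^2 \norm{V}_\infty \norm{u}_{L^2(\Omega)}.
\]
Chaining with the $L^2$ bound for $u$ just obtained yields \eqref{eq:est1}.

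The only delicate point is ensuring that the constants $C(R)$ at both steps depend only on $R$ in the full admissible range $k^2 \norm{V}_\infty \leq C_0$. For the Schur bound on $T_V$ this is immediate from the kernel estimates above. For the $H^2$ step one has to check that the lower-order $k^2 u^s$ term arising from rewriting $\Delta u^s = -k^2 u^s - k^2 V u$ can be absorbed into the constant; this works in the present regime because $k^2 \norm{V}_\infty$ is controlled and the $L^2$ norm of $u^s$ on the enlarged ball is already bounded by $k^2 \norm{V}_\infty \norm{u^i}_{L^2(\Omega)}$ times a geometric factor, via the integral representation. Apart from this routine bookkeeping, the proof is a standard contraction-mapping plus elliptic-regularity argument.
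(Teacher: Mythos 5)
Your proposal is correct and follows essentially the same route as the paper: the Lippmann--Schwinger equation, a Neumann-series inversion using the smallness of $k^2\norm{V}_\infty$, and the $L^2\to H^2$ mapping property of the Helmholtz volume potential to upgrade the estimate. The only cosmetic difference is that you run the fixed-point argument for $u$ on $\Omega$ while the paper runs it for $u^s$ on $B_R$; the constants and conclusion are the same.
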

\begin{proof}
  We only prove the case with $n=3$, and the other cases can be proved
  by following a similar argument.  Let
  \[
  \Phi(x)=\frac{1}{4\pi}\frac{e^{ik|x|}}{|x|},\ \ x\in\mathbb{R}^3\ \ |x|\neq
  0.
  \]
  and define
  \[
  \mathcal{L}_V(u)=\int_{\Omega} \Phi(x-y) V(y) u(y)\ dy.
  \]
  We know there is the following integral relation (cf. \cite{CK})
  \begin{equation}\label{eq:ls}
    u^s(x)=k^2\mathcal{L}_V(u^s)(x)+k^2\mathcal{L}_V(u^i)(x),\quad
    x\in\mathbb{R}^3.
  \end{equation}
  Using the facts that $u^s\in H^1(B_R)$, and $\mathcal{L}_V$ maps
  $L^2(B_R)$ continuously into $H^2(B_R)$ with norm $k^2
  \norm{V}_\infty$ (cf. Theorem 8.2 in \cite{CK}), one can easily see
  that $u^s\in H^2(B_R)$. For $k$ or $\norm{V}_\infty$ sufficiently
  small, we have from \eqref{eq:ls} that $u^s =
  (I-k^2\mathcal{L}_V)^{-1} (k^2\mathcal{L}_V(u^i))$ and hence
  \begin{equation}\label{eq:ls2}
    \|u^s\|_{L^2(B_R)}\leq C k^2 \norm{V}_{L^\infty(\Omega)}
    \norm{u^i}_{L^2(\Omega)},
  \end{equation}
  where $C$ is positive constant depending only on $R$. Finally, by
  applying the estimate in \eqref{eq:ls2} to the RHS of \eqref{eq:ls},
  together with the use of the mapping property of $\mathcal{L}_V$
  again, one can show \eqref{eq:est1}. The proof is complete.
\end{proof}

\begin{lemma} \label{emptyNodal}
  Let $n\in\{2,3\}$, $k>0$ and let $B_R$ be a central ball of radius
  $R\in\R_+$. There is $C>0$ such that if $V\in L^\infty(B_R)$ with
  \begin{equation}\label{eq:new1}
  k^2 \norm{V}_\infty < C
  \end{equation}
  and $u\in H^2(B_R)$ is the total field created by $V$ and an
  incident unit-modulus plane-wave $u^i$ of wave-number $k$, then $u$
  cannot vanish in $B_R$.
\end{lemma}

\begin{proof}
  Lemma~\ref{scatteredNorm} and the Sobolev embedding of $H^2$ into
  $L^\infty$ in 2D and 3D imply that $\norm{u^s}_\infty<1$ in $B_R$
  then. Hence
  \[
  \abs{u(x)} \geq \abs{u^i(x)} - \abs{u^s(x)} > 0.
  \]
  for any $x\in B_R$.
\end{proof}

\begin{remark}\label{rem:new1}
It is remarked that Lemma~\ref{emptyNodal} provides a sufficient scenario for the non-vanishing of the total wave fields at the vertex points, which is required in 
Theorems~\ref{nonConstantThm} and \ref{cellThm}. According to \eqref{eq:new1}, if the wave-number is sufficiently small compared to the refractive index, then the non-vanishing requirement 
is fulfilled. This is the only place where we make use of the low wave-number assumption. In our subsequent analysis, say Lemmas~\ref{Laplace} and \ref{boundary2corner}, the CGO (complex geometric optics) argument does not depend on the wave-number $k$ as long as it is fixed. Moreover, we would like to point out that the non-vanishing of the total wave fields may hold in more general scenarios without this low wave-number assumption. 
\end{remark}

The following lemma is a key identity in corner scattering. It is a
slight modification of the Alessandrini identity that is used in
inverse problems in general. It becomes very useful when $V$ and $V'$
have their supports on a common cone, and $u=u'$ outside that
cone. Letting $u_0$ be a complex geometrical optics (CGO) solution is a good
choice then.
\begin{lemma} \label{orthogonality}
  Let $U\subset\R^n$ be a bounded Lipschitz domain and $q,q'\in
  L^\infty(U)$. If $u_0, u, u' \in H^2(U)$ satisfy
  \begin{align*}
    (\Delta+q)u_0 = 0\\ (\Delta+q)u = 0\\ (\Delta+q')u' = 0
  \end{align*}
  in $U$ then
  \[
  \int_U (q-q') u' u_0 dx = \int_{\partial U} \big( u_0 \partial_\nu
  (u-u') - (u-u') \partial_\nu u_0 \big) d\sigma.
  \]
\end{lemma}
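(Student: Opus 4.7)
The plan is to apply Green's second identity to the pair $u_0$ and $u-u'$ on $U$, and then substitute the three PDEs. This is essentially a variant of the Alessandrini identity, adapted to the case where the ``background'' solution $u_0$ satisfies the Helmholtz equation with potential $q$ rather than with zero potential, and keeping the boundary terms since $u$ and $u'$ need not agree on $\partial U$.

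Concretely, I would first write down Green's second identity in the form
\[
\int_U \bigl(u_0 \Delta(u-u') - (u-u')\Delta u_0\bigr)\, dx = \int_{\partial U}\bigl(u_0 \partial_\nu(u-u') - (u-u')\partial_\nu u_0\bigr)\, d\sigma.
\]
Because $U$ is Lipschitz and $u_0,u,u'\in H^2(U)$, the interior traces and normal traces of these functions lie in $H^{3/2}(\partial U)$ and $H^{1/2}(\partial U)$ respectively, so the boundary integral is well-defined; the identity itself is obtained by approximating each factor by smooth functions and passing to the limit in the divergence theorem.

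Next, using $\Delta u_0=-qu_0$, $\Delta u=-qu$ and $\Delta u'=-q'u'$, I expand the integrand on the left:
\[
u_0\bigl(-qu+q'u'\bigr) - (u-u')(-qu_0) = -qu u_0 + q'u'u_0 + qu u_0 - qu'u_0 = (q'-q)\,u'u_0,
\]
so the $quu_0$ cross terms cancel. Integrating and tracking the sign gives the identity in the statement; equivalently, applying Green to $u_0$ and $u'-u$ produces $\int_U (q-q')u'u_0\,dx$ on the left and the displayed boundary integrand on the right, which is the form written in the lemma.

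Because the entire argument collapses to Green's identity plus one line of algebraic simplification, there is no substantive obstacle. The only point worth monitoring is that Green's identity really does apply under the stated $H^2$-on-Lipschitz regularity, so that each $\partial_\nu$-term is well-defined on $\partial U$; this is why the hypothesis $u_0,u,u'\in H^2(U)$ (rather than merely $H^1$) appears in the statement, and it is a standard consequence of the trace theorem for Lipschitz domains.
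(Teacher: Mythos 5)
Your proof is correct and takes essentially the same route as the paper's: both apply Green's second identity to $u_0$ and $u-u'$ (equivalently, write $(\Delta+q)(u'-u)=(q-q')u'$, integrate against $u_0$, and use $(\Delta+q)u_0=0$), so the $q\,u\,u_0$ cross terms cancel. One small remark: careful sign-tracking in your first display actually yields $\int_U(q-q')u'u_0\,dx=\int_{\partial U}\bigl(u_0\partial_\nu(u'-u)-(u'-u)\partial_\nu u_0\bigr)\,d\sigma$, i.e.\ the negative of the boundary term as printed in the lemma---a sign slip that the paper's own proof shares and that is immaterial downstream, since the boundary integral is only ever estimated in absolute value.
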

\begin{proof}
  We have $(\Delta+q)(u'-u) = (q-q')u'$. By Green's identities
  \begin{align*}
    &\int_U (q-q') u' u_0 dx = \int_U u_0 (\Delta+q)(u'-u) dx
    \\ &\qquad = \int_{\partial U} \big( u_0 \partial_\nu (u-u') -
    (u-u') \partial_\nu u_0\big) d\sigma + \int_U (u'-u) (\Delta+q)u_0
    dx
  \end{align*}
  and the claim follows because $(\Delta+q)u_0=0$.
\end{proof}

Another key-element in the proofs of corner scattering is to show that
the Laplace transforms of the indicator functions of certain cones do
not vanish when evaluated at the complex geometrical optics parameter
$\rho\in\C^n$, $\rho\cdot\rho=0$. Here we generalize previous results
of \cite{BL2016} to allow for arbitrary convex polyhedrons in 3D. All
previous papers on the topic, excluding \cite{EH1} had required that
the three dimensional polyhedra were rectangular boxes. The following
technique of splitting the Laplace transform into a simple but large
integral, and a difficult but bounded one was inspired by \cite{ABR}.

\begin{lemma} \label{Laplace}
  Let $n\in\{2,3\}$ and $\mathcal{C}\subset\R^n$ be an open convex
  polyhedral cone smaller than a half-space. Then there is
  $\rho\in\C^n$, $\rho\cdot\rho=0$, $\rho\neq0$ such that
  \[
  \int_{\mathcal C} e^{\rho\cdot x} dx \neq 0
  \]
  and the integral converges.
\end{lemma}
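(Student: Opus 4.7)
\emph{Proof proposal.} The plan is to exhibit an explicit $\rho$ of the required form. Write $\rho = \tau + i\eta$ with $\tau, \eta \in \R^n$; the conditions $\rho \cdot \rho = 0$ and $\rho \neq 0$ translate to $|\tau| = |\eta| > 0$ and $\tau \cdot \eta = 0$. Since $|e^{\rho \cdot x}| = e^{\tau \cdot x}$, the integral converges absolutely if and only if $\tau \cdot x < 0$ strictly on $\overline{\mathcal{C}} \setminus \{0\}$, that is, $-\tau$ lies in the interior of the dual cone $\mathcal{C}^*$. Because $\mathcal{C}$ is smaller than a half-space, $\mathcal{C}^*$ has nonempty interior and such $\tau$ exists. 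Note also the scaling identity $I(\sigma \rho) = \sigma^{-n} I(\rho)$ for $\sigma > 0$, which lets me normalize $|\tau| = |\eta| = 1$.

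In two dimensions, $\mathcal{C}$ is the positive span of two linearly independent direction vectors $e_1, e_2$, and the substitution $x = s_1 e_1 + s_2 e_2$ gives
\[
\int_\mathcal{C} e^{\rho \cdot x} dx = \frac{|\det(e_1, e_2)|}{(-\rho \cdot e_1)(-\rho \cdot e_2)},
\]
which is automatically nonzero whenever it converges.

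In three dimensions I would triangulate $\mathcal{C}$ into simplicial sub-cones $T_1, \ldots, T_N$ with common apex at the origin and pairwise disjoint interiors. Applying the same substitution to each $T_j$ with generators $e^j_1, e^j_2, e^j_3$ yields
\[
\int_{T_j} e^{\rho \cdot x} dx = \frac{|\det(e^j_1, e^j_2, e^j_3)|}{\prod_{i=1}^{3} (-\rho \cdot e^j_i)},
\]
so $I(\rho) = \sum_j \int_{T_j} e^{\rho \cdot x}\, dx$ is a rational function of $\rho$ with poles only along the hyperplanes $\{\rho \cdot e = 0\}$ for $e$ an edge of the triangulation. When $\mathcal{C}$ is itself simplicial this expression is already nonzero, so I may assume $\mathcal{C}$ has at least four edges. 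Following the splitting idea inspired by \cite{ABR}, I would isolate one simplicial sub-cone $\mathcal{S} \subset \mathcal{C}$ and write $I(\rho) = I_\mathcal{S}(\rho) + I_{\mathcal{C} \setminus \mathcal{S}}(\rho)$, where $I_\mathcal{S}$ is the simple, explicit, potentially large piece and $I_{\mathcal{C} \setminus \mathcal{S}}$ is a difficult but bounded correction, controlled in modulus by the positive real integral $\int_{\mathcal{C} \setminus \mathcal{S}} e^{\tau \cdot x}\, dx$ which depends only on $\tau$.

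The main obstacle lies in this final comparison: I must choose $\tau$ and $\mathcal{S}$ so that $|I_\mathcal{S}(\rho)|$ strictly exceeds the correction bound, yet the rigid normalization $|\tau| = |\eta| = 1$ rules out the usual trick of sending $|\tau| \to \infty$. I would resolve this by positioning $-\tau$ deep inside $\mathrm{int}(\mathcal{S}^*)$ but close to the face of $\partial\mathcal{C}^*$ dual to an edge of $\mathcal{C}$ not belonging to $\mathcal{S}$, so that the exponential decay on $\mathcal{C} \setminus \mathcal{S}$ is strong while the generators of $\mathcal{S}$ remain well-separated in angle from $\tau^\perp$, keeping each factor $|\rho \cdot e^\mathcal{S}_i|$ bounded above and hence $|I_\mathcal{S}|$ bounded below by a definite constant. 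The finite combinatorial data of the cone, together with the freedom to rotate $\eta$ on the unit circle in $\tau^\perp$, gives enough geometric slack to enforce this strict inequality, yielding the desired $\rho$ with $I(\rho) \neq 0$.
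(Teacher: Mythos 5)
Your overall architecture --- the explicit formula for simplex cones, the immediate resolution of the $2$D case, and the splitting of a non-simplicial $3$D cone into one simplex sub-cone plus a remainder in the spirit of \cite{ABR} --- is exactly the skeleton of the paper's proof. The gap is in the final step, and as written it is not merely vague but points in the wrong direction. You propose to keep $\abs{I_{\mathcal S}(\rho)}$ bounded \emph{below} by a constant and to make the remainder small, choosing $-\tau$ close to the facet of $\partial\mathcal C^*$ dual to an edge $e$ of $\mathcal C$ \emph{not} belonging to $\mathcal S$. But proximity of $-\tau$ to that facet means $\tau\cdot e\to 0$, i.e.\ the exponential decay along the ray through $e$ becomes arbitrarily \emph{weak}; since $e$ lies in $\overline{\mathcal C\setminus\mathcal S}$, the ``bounded correction'' $\int_{\mathcal C\setminus\mathcal S}e^{\tau\cdot x}\,dx$ in fact blows up like $\abs{\tau\cdot e}^{-1}$ while $\abs{I_{\mathcal S}}$ stays bounded --- the opposite of what you need. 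Moreover, even setting that aside, with the normalization $\abs{\tau}=\abs{\eta}=1$ there is no asymptotic parameter left: the best a priori bounds are $\abs{I_{\mathcal S}}\geq \abs{\det(e^{\mathcal S}_1,e^{\mathcal S}_2,e^{\mathcal S}_3)}/2^{3/2}$ and $\abs{I_{\mathcal C\setminus\mathcal S}}\leq C(\mathcal C,\mathcal S)$, two constants with no reason to be ordered the way you require; the appeal to ``geometric slack'' does not supply a mechanism.

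The paper's resolution is the mirror image of yours. Choose the simplex sub-cone $\mathcal C_1$ so that it \emph{contains} a distinguished edge $w_1$ of $\mathcal C$ (together with its two neighbours $w_2,w_m$), and let $-\tau=R_\varepsilon=(z+\varepsilon w_1)/\abs{z+\varepsilon w_1}$ approach the facet of the dual cone orthogonal to $w_1$, where $z\cdot w_1=0$ but $z\cdot w_j>0$ for every other edge. Then one denominator in the simplex formula satisfies $-\rho\cdot w_1\to0$, so $\abs{I_{\mathcal C_1}(\rho)}\to\infty$, while every unit vector $\theta\in\mathcal C\setminus\overline{\mathcal C_1}$ keeps $\Re\rho\cdot\theta\leq-\delta<0$ uniformly in $\varepsilon$, so that piece stays bounded. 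Unbounded plus bounded is eventually nonzero, which closes the argument with no delicate comparison of constants. If you degenerate toward the facet dual to an edge \emph{of} $\mathcal S$ rather than one outside it, your outline becomes essentially the paper's proof.
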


\begin{proof}
  We may assume that the vertex of $\mathcal{C}$ is the origin. If
  $\mathcal{C}$ is a simplex cone, i.e. it is generated by $n$
  linearly independent unit vectors
  \[
  w_1, \ldots, w_n \in \mathbb S^{n-1}, \qquad \mathcal{C} = \{
  \alpha_1 w_1 + \ldots + \alpha_n w_n \mid \alpha_j > 0\},
  \]
  then by a linear change of variables we see that
  \[
  \int_\mathcal{C} e^{\rho\cdot x} dx = \frac{\abs{w_1 \wedge \ldots
      \wedge w_n}}{(-\rho\cdot w_1) \cdot \ldots \cdot (-\rho\cdot
    w_n)}
  \]
  where $\abs{w_1 \wedge \ldots \wedge w_n}$ is the determinant of the
  $n\times n$ matrix with column vectors $w_1,\ldots,w_n$. The two
  dimensional claim follows immediately by letting $-\Re \rho$ be in
  the interior of $\mathcal{C}$.

  For the three dimensional case let the generators of $\mathcal{C}$
  be
  \[
  w_1, \ldots, w_m
  \]
  in other words $\{t w_j \mid t>0\}$ are its edges. We may order
  these vectors so that the segments $\{tw_j+(1-t)w_{j+1}\}$ and
  $\{tw_m+(1-t)w_1\}$ are on $\partial \mathcal{C}$. Fix $w=w_1$ and
  let $z$ be a unit vector such that
  \[
  z\cdot w=0, \qquad z\cdot w_j>0
  \]
  for $j>1$. For $\varepsilon>0$ let
  \[
  R_\varepsilon = \frac{z+\varepsilon w}{\abs{z+\varepsilon w}}
  \]
  and let $I$ be a unit vector such that $I\cdot z=I\cdot
  w=0$. Finally let
  \[
  \rho = -R_\varepsilon - i I
  \]
  and we observe that $\rho\cdot\rho=0$, $\rho\neq0$. Moreover we note
  that the Laplace transform converges when $\varepsilon$ is small
  enough but positive.

  Let $\mathcal{C}_1$ be the open cone generated by $w=w_1$, $w_2$ and
  $w_m$. We have
  \begin{align*}
    &\rho\cdot w_1 = \frac{-\varepsilon}{\abs{z+\varepsilon w}}
    \longrightarrow 0\\ &\rho\cdot w_j = -\frac{z\cdot w_j +
      \varepsilon w\cdot w_j}{\abs{z+\varepsilon w}} - i I\cdot w_j
    \longrightarrow -z\cdot w_j-i I\cdot w_j
  \end{align*}
  for $j\in\{2,m\}$ as $\varepsilon\to0$. Hence $\abs{\rho\cdot w_j}
  \geq \abs{z\cdot w_j} > 0$. Thus by the formula for the transform of
  a simplex cone
  \[
  \lim_{\varepsilon\to0} \abs{\int_{\mathcal{C}_1} e^{\rho\cdot x} dx}
  = \infty.
  \]

  We shall prove that the integral over
  $\mathcal{C}'=\mathcal{C}\setminus\overline{\mathcal{C}_1}$ is
  bounded next. This cone is generated by the vectors
  $w_2,\ldots,w_m$. There is a positive constant
  $\delta(w_2,\ldots,w_m)$ such that if $\theta\in \mathcal{C}'$ is a
  unit vector then
  \[
  \theta = \alpha_2w_2 + \ldots + \alpha_mw_m, \qquad \max_j \alpha_j
  \geq \delta(w_2,\ldots,w_m) > 0
  \]
  for some non-negative real numbers $\alpha_j$. Then
  \begin{align*}
    &\lim_{\varepsilon\to0} \Re\rho\cdot\theta = - \sum_{j=2}^m
    \alpha_j z\cdot w_j \leq - \min_{j\geq2} z\cdot w_j \sum_{j=2}^m
    \alpha_j\\ &\qquad\leq - \min_{j\geq2} z\cdot w_j \,
    \delta(w_2,\ldots,w_m) < 0
  \end{align*}
  by the choice of $z$. By the triangle inequality
  \[
  \abs{\int_{\mathcal{C}'} e^{\rho\cdot x} dx} \leq \int_{\mathcal{C}'
    \cap \mathbb S^2} \int_0^\infty e^{\Re\rho\cdot\theta r} r^2 dr
  d\sigma(\theta)
  \]
  and hence, writing $\delta(z,w_2,\ldots,w_m) = \min_{j\geq2} z\cdot
  w_j \, \delta(w_2,\ldots,w_m)$, we get
  \[
  \lim_{\varepsilon\to0} \abs{\int_{\mathcal{C}'} e^{\rho\cdot x} dx}
  \leq \sigma(\mathcal{C}'\cap\mathbb S^2) \int_0^\infty
  e^{-\delta(z,w_2,\ldots,w_m)r} r^2 dr = C(z,w_2,\ldots,w_m)
  \]
  the latter of which is a finite constant.

  The integral over $\mathcal{C}_1$ can be made arbitrarily large
  while the integral over
  $\mathcal{C}\setminus\overline{\mathcal{C}_1}$ stays uniformly
  bounded. Hence the whole integral can be made arbitrarily large, and
  thus nonzero.
\end{proof}

\begin{lemma} \label{boundary2corner}
  Let $\Omega\subset\R^n$, $n\in\{2,3\}$ be a bounded domain. Let
  $q,q'\in L^\infty(\Omega)$ and let $u, u' \in H^2(\Omega)$ solve
  \[
  (\Delta+q)u = 0, \qquad (\Delta+q')u' = 0
  \]
  in $\Omega$. Let $x_c \in \partial\Omega$ be a point for which there
  is a neighbourhood $B$ and an admissible cell $\Sigma$ having $x_c$
  as vertex such that $B\cap\Omega=B\cap\Sigma$.

  Assume that $u=u'$ and $\partial_\nu u = \partial_\nu u'$ on
  $B\cap\partial\Omega$. If $q$ and $q'$ are $C^\alpha$ uniformly
  H\"older-continuous functions with $\alpha>0$ in 2D and $\alpha>1/4$
  in 3D in $B \cap \Omega$ then
  \[
  (q-q')(x_c) u(x_c) = (q-q')(x_c) u'(x_c) =0.
  \]
\end{lemma}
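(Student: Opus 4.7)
The plan is to adapt the standard corner-scattering machinery to this two-potential setting. Translate so $x_c=0$, let $\mathcal C$ denote the tangent cone of $\Sigma$ at $0$ (an open convex polyhedral cone strictly smaller than a half-space, since $0$ is a vertex of the convex polytope $\Sigma$), and shrink $B$ if necessary so that $B\cap\Omega=B\cap\mathcal C$. By Lemma~\ref{Laplace} there exists $\rho_0\in\C^n\setminus\{0\}$ with $\rho_0\cdot\rho_0=0$, $\Re(\rho_0\cdot y)\le -c\abs{y}$ on $\mathcal C$, and
\[
c_0 := \int_{\mathcal C} e^{\rho_0\cdot y}\, dy \ne 0.
\]
For $\tau>0$ put $u_\tau(x):=e^{\tau\rho_0\cdot x}$; this is harmonic, decays like $e^{-c\tau\abs{x}}$ on $\mathcal C$, and the change of variables $y=\tau x$ gives $\tau^n\int_U u_\tau\, dx\to c_0$ as $\tau\to\infty$, where $U:=B\cap\Omega=B\cap\mathcal C$.

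Set $w:=u-u'$, which satisfies $(\Delta+q)w=(q'-q)u'$ in $U$ and $w=\partial_\nu w=0$ on $B\cap\partial\Omega$ by hypothesis. Green's identity applied with the harmonic $u_\tau$ yields
\[
\int_U (q'-q)\,u'\,u_\tau\, dx = \int_U q\, u_\tau w\, dx + \int_{\partial U}\bigl(u_\tau\partial_\nu w - w\partial_\nu u_\tau\bigr)\, d\sigma.
\]
The surface integrand vanishes on $B\cap\partial\Omega$ by hypothesis, and on the remaining piece $\mathcal C\cap\partial B$ the exponential decay of $u_\tau$ and $\nabla u_\tau$ together with interior bounds on $w$ makes the contribution $O(\tau e^{-c\tau r_0})$, where $r_0$ is the radius of $B$.

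I would then extract the leading behaviour at the vertex in both volume integrals. By H\"older continuity of $q,q'$ and continuity of $u'$ (via elliptic regularity of $(\Delta+q')u'=0$),
\[
(q'-q)(x)\,u'(x) = (q'-q)(0)\,u'(0) + R(x),\qquad \abs{R(x)}\le C\abs{x}^\alpha,
\]
and the same rescaling gives $\int_U R\, u_\tau\, dx = O(\tau^{-n-\alpha})$. For the other volume integral, the Cauchy data hypothesis together with $w\in H^2$ forces $w$ \emph{and} $\nabla w$ to vanish on every face of $\partial\mathcal C\cap B$, so the zero-extension of $w$ across those faces lies in $H^2$ near $0$; Sobolev embedding yields $\abs{w(x)}\le C\abs{x}^\beta$ for some $\beta>0$ (any $\beta<2-n/2$), whence $\int_U q\, u_\tau w\, dx=O(\tau^{-n-\beta})$.

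Multiplying the identity by $\tau^n$ and letting $\tau\to\infty$, the boundary term is super-polynomially small while the two error terms are $O(\tau^{-\alpha})$ and $O(\tau^{-\beta})$, so the leading term converges to $(q-q')(x_c)\,u'(x_c)\cdot c_0$. Since $c_0\ne 0$, this forces $(q-q')(x_c)\,u'(x_c)=0$; the symmetric argument with $\tilde w=u'-u$ and $(\Delta+q')\tilde w=(q-q')u$ delivers the companion identity $(q-q')(x_c)\,u(x_c)=0$. The main technical difficulty lies in obtaining the quantitative decay $\abs{w(x)}\lesssim\abs{x}^\beta$ near the polyhedral vertex, where the boundary loses smoothness, and in balancing this against the H\"older exponent of the remainder $R$. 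This interplay is precisely where the dimension-dependent threshold in the hypothesis enters: the condition $\alpha>1/4$ in three dimensions compensates for the weaker 3D Sobolev embedding $H^2\hookrightarrow C^{0,1/2-\varepsilon}$, which limits the available value of $\beta$ at the corner.
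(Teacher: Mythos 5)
Your proof is correct, but it takes a genuinely different route from the paper's. The paper tests the equation against a complex geometrical optics solution $u_0=e^{\rho\cdot x}(1+\psi)$ of $(\Delta+q)u_0=0$ and applies the Alessandrini-type identity of Lemma~\ref{orthogonality}; because $u_0$ solves the same equation as $u$, the volume term $\int_U q\,u_0\,(u-u')\,dx$ cancels identically, and only the boundary term and the $\psi$-remainder (estimated in $L^p$) must be controlled. You instead test against the plain harmonic exponential $e^{\tau\rho_0\cdot x}$, which leaves the extra volume term $\int_U q\,u_\tau w\,dx$; you control it by noting that the vanishing Cauchy data allows $w$ to be extended by zero across $B\cap\partial\mathcal C$ as an $H^2$ function, so that Sobolev embedding gives $\abs{w(x)}\lesssim\abs{x}^{\gamma}$ near the vertex and the term is $O(\tau^{-n-\gamma})$. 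Both arguments then hinge on the same nondegeneracy $\int_{\mathcal C}e^{\tau\rho_0\cdot x}dx=c_0\tau^{-n}$ from Lemma~\ref{Laplace} together with the uniform decay $\Re\rho_0\cdot\theta\le -c<0$ on the closed cone (which the construction in that lemma does provide for fixed small $\varepsilon$). What your version buys is the complete elimination of the CGO machinery: in the paper the hypothesis $\alpha>1/4$ in 3D is needed solely to construct $\psi$ with the stated $L^p$ decay (via \cite{PSV,BL2016,HSV}), so your argument actually proves the lemma for any $\alpha>0$ in both dimensions. Your closing remark misattributes that threshold to the Sobolev embedding --- the embedding only produces the exponents $1/2$ and $\gamma$ in your error terms, all of which are harmless --- but this does not affect the validity of the proof. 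What the CGO route buys in exchange is extensibility to the harder situation, mentioned in the introduction and treated in \cite{EH2}, where the total wave vanishes to some order at the vertex and one must extract higher-order terms for which the crude bound $\abs{w}\lesssim\abs{x}^{\gamma}$ would no longer suffice.
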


\begin{proof}
  Choose coordinates such that $x_c=\bar0$. We may assume that $B$ is
  a small $x_c$-centred disc with $B\cap\Omega=B\cap\mathcal{C}$ for
  some open cone $\mathcal{C}\subset\R^n$ with vertex $x_c$. We shall
  prove the case $(q-q')(x_c)u'(x_c)=0$. The other one follows by
  symmetry.

  For any $\rho \in \C^n$ with $\rho\cdot\rho=0$ and $\abs{\Re\rho}>0$
  large enough depending on $q, B, \mathcal C$, the equation
  \[
  \big(\Delta+q\big) u_0 = 0
  \]
  has a complex geometrical optics solution $u_0(x) = \exp(\rho\cdot
  x)(1+\psi(x))$ in $B\cap\mathcal C$. This is given for example by
  Theorem 3.1 in \cite{PSV} for 2D, Proposition 7.6 in \cite{BL2016}
  or Lemmas 2.1 and 3.2 in \cite{HSV}, all of them requiring
  $\alpha>1/4$ in 3D. They imply the existence of $p\geq2$ and
  $\delta>0$, both independent of $\rho$, such that
  \[
  \norm{\psi}_{L^p(B\cap\mathcal C)} \leq C
  \abs{\Re\rho}^{-n/p-\delta}
  \]
  and $u_0\in H^2(B\cap\mathcal C)$.

  Recall that $u=u'$ and $\partial_\nu u = \partial_\nu u'$ on
  $B\cap\partial\mathcal C$. Lemma~\ref{orthogonality} gives
  \[
  \int_{B\cap\mathcal C} (q-q') u' u_0 dx = \int_{\mathcal C \cap
    \partial B} \big( u_0 \partial_\nu (u-u') - (u-u') \partial_\nu
  u_0 \big) d\sigma.
  \]
  Choose $\rho$ such that $\exp(\Re\rho\cdot x)$ decays exponentially
  in the cone $\mathcal C$. Then the boundary integral's absolute
  value can be estimated as
  \begin{align*}
    &\abs{\ldots} \leq \norm{u-u'}_{H^2(B\cap\mathcal C)}
    \norm{u_0}_{H^2(B\cap\mathcal C)} \\ &\qquad\leq C
    \norm{u-u'}_{H^2(B\cap\mathcal C)} (1+\abs{\Re\rho})
    e^{-c\abs{\Re\rho}} (1+\norm{\psi}_{H^2(B\cap\mathcal C)})
    \\ &\qquad\leq C e^{-c'\abs{\Re\rho}}
  \end{align*}
  when $\abs{\Re\rho}$ is large enough: $\norm{\psi}_{H^2}$ is of the
  order $\abs{\Re\rho}^2$ and by Lemma~\ref{scatteredNorm} the $H^2$
  norm of $u-u'=u^s-{u'}^s$ is finite. Let us split and estimate the
  integral over $B\cap\mathcal C$ next. Write $\delta_q = q-q'$ and
  split
  \begin{align*}
    (q-q')(x) &= \delta_q(\bar0) + \big( \delta_q(x) - \delta_q(\bar0)
    \big),\\ u'(x) &= u'(\bar0) + \big( u'(x) - u'(\bar0)
    \big),\\ u_0(x) &= e^{\rho\cdot x} + e^{\rho\cdot x}\psi(x),
  \end{align*}
  where the pointwise values of $u'$ are well-defined since
  $H^2$-functions are continuous in 2D and 3D. We have the estimates
  \begin{align*}
    \abs{\delta_q(x) - \delta_q(\bar0)} &\leq (\norm{q}_{C^\alpha} +
    \norm{q'}_{C^\alpha}) \abs{x}^\alpha, \\ \abs{u'(x) - u'(\bar0)}
    &\leq C_B \norm{u'}_{H^2(B)} \abs{x}^{1/2},
    \\ \norm{\psi}_{L^p(B)} &\leq C \abs{\Re\rho}^{-n/p-\delta}
  \end{align*}
  by the definition of H\"older-continuity, the Sobolev embedding of
  $H^2(B)$ into the space $C^{1/2}$ of H\"older-continuous functions
  in 2D and 3D, and the previous paragraph about the complex
  geometrical optics solution.

  If $\rho$ is such that the integral $\int_{\mathcal C}
  \exp(\rho\cdot x) dx$ converges, we see the following telescope
  identity:
  \begin{align*}
    &(q-q')(\bar0) u'(\bar0) \int_{\mathcal C} e^{\rho\cdot x} dx =
    (q-q')(\bar0) u'(\bar0) \int_{\mathcal C \setminus B} e^{\rho\cdot
      x} dx \\ &\qquad - u'(\bar0) \int_{B\cap\mathcal C} e^{\rho\cdot
      x} \big( \delta_q(x) - \delta_q(\bar0) \big) dx \\ &\qquad -
    \int_{B\cap\mathcal C} e^{\rho\cdot x} (q-q')(x) \big( u'(x) -
    u'(\bar0) \big) dx \\ &\qquad - \int_{B\cap\mathcal C}
    e^{\rho\cdot x} (q-q')(x) u'(x) \psi(x) dx \\ &\qquad +
    \int_{\mathcal C \cap \partial B} \big( u_0 \partial_\nu (u-u') -
    (u-u') \partial_\nu u_0 \big) d\sigma.
  \end{align*}
  We will estimate the various terms in the identity above next. The
  first and last term decay exponentially as $\abs{\Re\rho}\to\infty$
  since $\Re\rho\cdot x \leq - c \abs{\Re\rho}\abs{x}$ for
  $x\in\mathcal C$ is satisfied when $\int_{\mathcal C} \exp(\rho\cdot
  x)dx$ is finite. For the three other terms note that
  \[
  \abs{\int_{\mathcal C} e^{\rho\cdot x} \abs{x}^s dx} \leq C
  \int_0^\infty e^{-c\abs{\Re\rho}r} r^{s+n-1} dr =
  \abs{\Re\rho}^{-n-s} \int_0^\infty e^{-cr'} {r'}^{n+s-1} dr'
  \]
  and similarly
  \[
  \norm{e^{\rho\cdot x}}_{L^{p'}(\mathcal C)} \leq \left(
  \int_{\mathcal C} e^{-c \abs{\Re\rho}\abs{x} p'} dx \right)^{1/p'} =
  C \abs{\Re\rho}^{-n/p'}.
  \]
  Using the triangle and H\"older's inequalities with exponent
  $(p,{p'})$ where $1/p+1/{p'}=1$ and the estimates for $\delta_q$ and
  $u'$ from the previous paragraph, we see that
  \begin{align*}
    &\abs{ (q-q')(\bar0) u'(\bar0) \int_{\mathcal C} e^{\rho\cdot x}
      dx } \\ &\qquad \leq C \big( e^{-c \abs{\Re\rho}} +
    \abs{\Re\rho}^{-n-\alpha} + \abs{\Re\rho}^{-n-1/2} +
    \abs{\Re\rho}^{-n-\delta} \big)
  \end{align*}
  for $\abs{\Re\rho}$ large engouh.

  For the lower bound note that no matter how large we require
  $\abs{\Re\rho}$ to be, Lemma~\ref{Laplace} and a simple
  multiplication by a real number give the existence of $\rho$
  satisfying all the requirements stated previously, and that
  \[
  \abs{ \int_{\mathcal C} e^{\rho\cdot x} dx } \geq C
  \abs{\Re\rho}^{-n}
  \]
  for some $C\neq0$. This implies that $(q-q')(\bar0) u'(\bar0) = 0$.
\end{proof}

\begin{proposition}\label{equalCells}
  Let $D\subset\R^n$, $n\in\{2,3\}$, be a bounded domain and
  $P,P'\Subset D$ admissible cells. Let $\alpha>0$ in 2D and
  $\alpha>1/4$ in 3D. Let $q,q'\in L^\infty(D)$ be uniformly
  $C^\alpha$ H\"older continuous outside $P,P'$, respectively and
  $q=q'$ outside $P\cup P'$. Assume that $q$ restricted to $P$ is
  uniformly $C^\alpha$ H\"older continuous near each vertex $x_c$ of
  $P$, but that
  \[
  \lim_{\substack{x\in P\\ x\to x_c}} q(x) \neq \lim_{\substack{x\in
      D\setminus\overline{P}\\ x\to x_c}} q(x)
  \]
  and similarly for $q'$ in $P'$.

  Let $u, u' \in H^2(D)$ solve
  \[
  (\Delta+q)u = 0, \qquad (\Delta+q')u' = 0
  \]
  and $u=u'$ outside $P\cup P'$. Assume that $u(x_c)\neq0$ or
  $u'(x_c)\neq0$ at every vertex $x_c$ of $P\cup P'$. Then $P=P'$ and
  \[
  \lim_{\substack{x\in P\\ x\to x_c}} q(x) = \lim_{\substack{x\in
      P\\ x\to x_c}} q'(x).
  \]
\end{proposition}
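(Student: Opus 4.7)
The plan is to handle the proposition in two applications of Lemma \ref{boundary2corner}: first I show $P=P'$, and then, with that equality in hand, I deduce equality of the two interior-limit values at each vertex.

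\textbf{Shape recovery.} I argue by contradiction. If $P\neq P'$, then one of the two cells, say $P$, must have a vertex outside the closure of the other: otherwise all vertices of $P$ lie in the convex set $\overline{P'}$, giving $P\subset\overline{P'}$ and then $P\subset P'$ by openness of $P$, and symmetry yields $P=P'$. Fix such a vertex $x_c$ of $P$ with $x_c\notin\overline{P'}$, and pick an open ball $B$ centred at $x_c$ with $\overline{B}\cap\overline{P'}=\emptyset$ and $B\cap P = B\cap\Sigma$ for the admissible cell $\Sigma=P$. The hypothesis $u=u'$ outside $P\cup P'$ reduces in $B$ to $u=u'$ on $B\setminus\overline{P}$; since $u,u'\in H^2(D)$, their normal derivatives have single-valued traces on $\partial P$, and this propagates to $u=u'$ and $\partial_\nu u=\partial_\nu u'$ on $B\cap\partial P$. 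On $B\cap P$ the potential $q$ is H\"older near $x_c$ by hypothesis, while $q'$ is H\"older throughout $B$ because $B$ avoids $\overline{P'}$. Lemma \ref{boundary2corner} applied with $\Omega=P$ then gives
\[
(q-q')(x_c)\,u(x_c) = (q-q')(x_c)\,u'(x_c) = 0,
\]
where $q(x_c)$ denotes the $P$-interior limit. Since $q=q'$ on $B\setminus\overline{P}$ and $q'$ is continuous at $x_c$, we have $q'(x_c)=\lim_{x\to x_c,\,x\in D\setminus\overline{P}}q(x)$, which by the jump hypothesis differs from $q(x_c)$. Hence $(q-q')(x_c)\neq 0$ and so $u(x_c)=u'(x_c)=0$. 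But $(P\cup P')\cap B = P\cap B$, so $x_c$ is a vertex of $P\cup P'$, contradicting the non-vanishing hypothesis.

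\textbf{Value recovery.} Once $P=P'$, both $q=q'$ and $u=u'$ hold outside $P$. At any vertex $x_c$ of $P$ the same propagation as above gives $u=u'$ and $\partial_\nu u=\partial_\nu u'$ on $B\cap\partial P$ for a small ball $B$, while now both $q$ and $q'$ are H\"older continuous near $x_c$ from inside $P$ by hypothesis. A second application of Lemma \ref{boundary2corner} with $\Omega=P$ gives
\[
(q-q')(x_c)\,u(x_c) = (q-q')(x_c)\,u'(x_c) = 0,
\]
where the difference is between the interior limits of $q$ and $q'$. Because $u(x_c)\neq 0$ or $u'(x_c)\neq 0$, we conclude $(q-q')(x_c)=0$, as required.

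The main obstacle I anticipate is purely geometric: locating a vertex of $P$ or $P'$ that lies outside the closure of the other cell so that Lemma \ref{boundary2corner} can be invoked with the cell $\Sigma$ literally equal to $P$. The accompanying analytic step — that $u=u'$ on a one-sided neighbourhood propagates to matching Cauchy data on the interface — is a standard consequence of global $H^2$-regularity, but is the one place where one must track the hypotheses carefully.
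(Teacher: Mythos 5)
Your proof is correct and follows essentially the same route as the paper: both arguments hinge on applying Lemma~\ref{boundary2corner} at a vertex of one cell lying outside the closure of the other, using the jump hypothesis to force $u(x_c)=u'(x_c)=0$ (equivalently, to contradict the jump), concluding $P=P'$ from the fact that admissible cells are the interiors of the convex hulls of their vertices, and then reapplying the lemma at each common vertex to match the interior limits. The only cosmetic difference is that you take $\Omega=P$ with a small ball avoiding $\overline{P'}$ where the paper takes $\Omega=P\cup P'$; near the chosen vertex these coincide, so the arguments are the same.
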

\begin{proof}
  If $P=\emptyset=P'$ we are done. Otherwise use
  Lemma~\ref{boundary2corner} with $\Omega=P\cup P'$, $q,q',u,u'$ all
  restricted to $\Omega$, and $x_c$ a vertex of $P$ (for example) that
  does not belong to $\overline{P'}$. Since $u=u'$ in
  $D\setminus\overline{\Omega}$ we have
  \[
  (q-q')(x_c) u(x_c) = (q-q')(x_c) u'(x_c) = 0
  \]
  where $q(x_c)$ and $q'(x_c)$ are the limits of $q(x)$, $q'(x)$ as
  $x\to x_c$ in $\Omega$, and hence in $P$. Since $u(x_c)\neq0$ or
  $u'(x_c)\neq0$ we get $q(x_c)=q'(x_c)$. However $q'$ is continuous
  in a neighbourhood of $x_c$ in $D$ because this point is away from
  $P'$. Moreover $q'=q$ outside $\Omega$. Thus
  \[
  \lim_{\substack{x\in P\\ x\to x_c}} q(x) = q(x_c) = q'(x_c) =
  \lim_{\substack{x\in D\\ x\to x_c}} q'(x) = \lim_{\substack{x\in
      D\setminus\overline{P}\\ x\to x_c}} q'(x) = \lim_{\substack{x\in
      D\setminus\overline{P}\\ x\to x_c}} q(x)
  \]
  which is a contradiction.

  Hence all vertices of $P$ belong to $\overline{P'}$. Similarly we
  see that all vertices of $P'$ belong to $\overline{P}$. Admissible
  cells are the interior of the convex hull of their vertices, so we
  have $P \subset P' \subset P$ i.e. $P=P'$. However we may then use
  Lemma~\ref{boundary2corner} directly to see that $q(x_c)=q'(x_c)$
  because $q'$ is not continuous anymore around $x_c$ in $D$.
\end{proof}

For the recovery of the piecewise constant potentials we will also
need a tool to propagate the equality of the total fields $u=u'$ into
cells where we have shown that the potentials are equal. This tool is
Holmgren's uniqueness theorem.

\begin{lemma} \label{holmgren}
  Let $U\subset\R^n$ be a connected Lipschitz domain. Let $\Gamma
  \subset \partial U$ be a non-empty relatively open subset of the
  boundary of $U$. Let $w \in H^2(U)$ satisfy $(\Delta+c)w=0$ in $U$
  for some constant $c\in\C$ and assume that the Dirichlet and Neumann
  data of $w$ vanishes on $\Gamma$. Then $w=0$ in $U$.
\end{lemma}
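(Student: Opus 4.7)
The plan is to use the classical \emph{zero extension plus unique continuation} argument, which is particularly clean here because the operator $\Delta+c$ has constant (hence real-analytic) coefficients and is elliptic, so any distributional solution is automatically real-analytic.

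First I would localize near a boundary point $x_0\in\Gamma$. Since $\Gamma$ is relatively open in $\partial U$, I can pick a small open ball $B$ centered at $x_0$ with $B\cap\partial U\subset\Gamma$, and write $B^+=B\cap U$, $B^-=B\setminus\overline{U}$. The idea is to extend $w$ by zero on $B^-$ to a function $\tilde w$ on $B$. Because $w\in H^2(U)$ and both the trace of $w$ and the trace of $\partial_\nu w$ vanish on $\Gamma\cap\overline{B}$, the transmission argument gives $\tilde w\in H^2(B)$: the vanishing Dirichlet data ensures $\tilde w\in H^1(B)$, and the vanishing Neumann data ensures that the weak gradient has no singular part across $\Gamma$, so $\tilde w\in H^2(B)$.

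Next I would verify that $(\Delta+c)\tilde w=0$ in $\D'(B)$. For a test function $\varphi\in C_c^\infty(B)$ this reduces to
\[
\int_B \tilde w (\Delta+c)\varphi\, dx = \int_{B^+} w(\Delta+c)\varphi\, dx,
\]
and integrating by parts twice on $B^+$, the boundary contributions on $B\cap\partial U\subset\Gamma$ drop out precisely because both $w$ and $\partial_\nu w$ vanish there, while $\varphi$ vanishes on $\partial B$. Thus $\tilde w$ is a distributional (hence $H^2$, hence classical) solution of the elliptic, constant-coefficient equation $(\Delta+c)\tilde w=0$ on the ball $B$.

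Ellipticity with analytic coefficients then forces $\tilde w$ to be real-analytic on $B$ by standard elliptic regularity. Since $\tilde w\equiv 0$ on the non-empty open set $B^-$, real-analyticity together with connectedness of $B$ yields $\tilde w\equiv 0$ on all of $B$. In particular $w\equiv 0$ on the non-empty open set $B^+\subset U$. Finally, since $w$ is itself a solution of $(\Delta+c)w=0$ on the connected open set $U$, it is real-analytic on $U$, and vanishing on the open subset $B^+$ forces $w\equiv 0$ in $U$. The only real subtlety is the $H^2$-extension step, which is a standard but non-trivial trace computation; everything else is automatic from ellipticity with constant coefficients.
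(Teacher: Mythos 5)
Your proof is correct and follows the paper's skeleton: extend $w$ by zero across $\Gamma$ into a ball $B$, verify $(\Delta+c)\tilde w=0$ in the sense of distributions via Green's identities together with the vanishing Cauchy data of $w$ on $\Gamma$ and of $\varphi$ on $\partial B$, conclude $\tilde w\equiv0$ on $B$, and finish by real-analyticity of $w$ on the connected set $U$. The one genuine divergence is the tool used to kill $\tilde w$ on all of $B$: the paper invokes Holmgren's uniqueness theorem for distributions, using only that $\tilde w$ solves an analytic-coefficient equation and vanishes on the non-empty open set $B\setminus\overline{U}$, whereas you invoke analytic hypoellipticity of the constant-coefficient elliptic operator $\Delta+c$, so that $\tilde w$ is real-analytic on the connected ball $B$ and the identity theorem does the rest. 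For this particular operator your route is, if anything, more elementary and self-contained, since analytic hypoellipticity applies directly to distributional solutions; Holmgren's theorem is the more general tool (it needs analyticity of the coefficients but not ellipticity) and is slightly heavier machinery than required here. Two small remarks: your intermediate claim that $\tilde w\in H^2(B)$ is correct (the tangential derivatives of the trace of $w$ vanish because the trace itself vanishes on the relatively open set $\Gamma$, and the normal derivative vanishes by hypothesis, so $\nabla w$ has vanishing trace on $\Gamma$), but it is entirely superfluous --- the distributional identity is all that either argument needs; and you should, as the paper does, record why $B\setminus\overline{U}$ is non-empty, namely that a Lipschitz domain lies by definition on one side of its boundary.
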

\begin{proof}
  Let $B \subset \R^n$ be a smooth open neighbourhood of a boundary
  point in $\Gamma$ such that $B \cap \partial U \subset \Gamma$. Let
  $w^*$ be the extension of $w$ by zero to $B$. It is a
  distribution. We will show that $(\Delta+c)w^*=0$ in $B$. Let
  $\varphi \in C^\infty_0(B)$. Then by noting that $(\Delta+c)w=0$ in
  $U$, and by the vanishing of the Cauchy data of $w$ on $\Gamma$, and
  the vanishing of the Cauchy data of $\varphi$ on $\partial B$, there
  holds
  \begin{align*}
    &\int_B w^*(x) (\Delta+c)\varphi(x) dx = \int_{B \cap U} w(x)
    (\Delta+c)\varphi(x) dx \\ &\qquad = \int_{\partial( B \cap U )}
    \big( w(x) \partial_\nu \varphi(x) - \varphi(x) \partial_\nu w(x)
    \big) d\sigma(x) = 0.
  \end{align*}

  Hence we have $(\Delta+c)w^*=0$ in $B$,
  \[
  B = \big( B \cap U \big) \cup \big( B \cap \Gamma \big) \cup \big( B
  \setminus \overline U \big),
  \]
  and $w^*=0$ in $B \setminus \overline U$ which is non-empty since
  Lipschitz domains are by definition on one side of their
  boundary. Holmgren's uniqueness theorem for distributions shows that
  $w^*=0$ in a neighbourhood of $B \cap \Gamma$ \cite{Holmgren,
    Treves}, so $w=0$ in a non-empty open subset of $U$. It is
  real-analytic in $U$, and so must vanish everywhere there by
  connectedness.
\end{proof}

\begin{proof}[Proof of Theorem~\ref{nonConstantThm}]
  By the Rellich's theorem and unique continuation arguments of any of
  \cite{BPS,PSV,EH1,HSV} we have $u=u'$ outside $P\cup P'$, and so
  Proposition~\ref{equalCells} implies $P=P'$. Inside $P$ the total
  waves $u,u'$ satisfy
  \[
  \big(\Delta+k^2(1+\varphi)\big)u = 0, \qquad
  \big(\Delta+k^2(1+\varphi')\big)u' = 0,
  \]
  because $\chi_P = 1$ there. Lemma~\ref{boundary2corner} shows then
  that
  \[
  k^2 (\varphi-\varphi')(x_c) u(x_c) = k^2 (\varphi-\varphi')(x_c)
  u'(x_c) = 0
  \]
  for any vertex $x_c$ of $P$. Since $u(x_c)\neq0$ or $u'(x_c)\neq0$
  we have $\varphi(x_c)=\varphi'(x_c)$.
\end{proof}

\begin{proof}[Proof of Theorem~\ref{cellThm}]
  Write the potentials explicitely as
  \[
  V(x) = \sum_{j=1}^\infty V_j \chi_{\Sigma_j}, \qquad V'(x) =
  \sum_{j=1}^\infty V_j' \chi_{\Sigma_j}
  \]
  for some sequences of constants $V_j,V_j'\in\C$.
  
  By Rellich's theorem and unique continuation the total waves $u,u'$
  satisfying
  \[
  \big(\Delta+k^2(1+V)\big)u = 0, \qquad \big(\Delta+k^2(1+V')\big)u'
  = 0
  \]
  are equal in
  $\operatorname{int}\big(\R^n\setminus\cup_j\overline{\Sigma_j})$
  since $\cup_j \overline{\Sigma_j}$ is simply connected. We shall
  denote this exterior domain by $\Sigma_0$. We will prove the
  following claim by induction: $V_j=V_j'$ and $u=u'$ in $\Sigma_j$
  for $j<m$. The case $m=1$ is true with the interpretation that
  $V_0=V_0'=0$.

  Assume that the induction hypothesis holds for $j<m$. If
  $\Sigma_m=\emptyset$ then $V_m=V_m'=0$ and the induction step is
  trivial. Otherwise let $x_c$ be a vertex of $\Sigma_m$ that's
  connected to infinity by a path that stays a uniform positive
  distance away from $\Sigma_k$ for $k>m$. This means that there is
  $r>0$ such that $B(x_c,r) \setminus \overline{\Sigma_m}$ belongs to
  $\Sigma_0 \cup \ldots \cup \Sigma_{m-1}$. The induction assumption
  implies that $u=u'$ on $B(x_c,r) \setminus
  \overline{\Sigma_m}$. Hence also $u=u'$ and $\partial_\nu
  u=\partial_\nu u'$ on $B(x_c,r) \cap \partial \Sigma_m$. Proposition
  \ref{boundary2corner} and $u(x_c)\neq0$ or $u'(x_c)\neq0$ imply that
  $V_m=V_m'$. Finally, Holmgren's uniqueness theorem of Lemma
  \ref{holmgren} implies that $u=u'$ on $\Sigma_m$. The induction is
  complete.
\end{proof}

\begin{proof}[Proof of Theorem~\ref{nestedThm}]
  Write out the nested structures of $V$ and $V'$ as
  \[
  V = \sum_{j=1}^\infty V_j \chi_{\Sigma_j}, \qquad V' =
  \sum_{j=1}^\infty V_j' \chi_{\Sigma'_j}
  \]
  where $\Sigma_j = D_j \setminus \overline{D_{j+1}}$ for admissible
  cells or empty sets $D_j$ with
  \[
  D_j \Supset D_{j+1}.
  \]
  Similarly for $V'$.

  Let us prove that $D_j=D_j'$, $V_j=V_j'$ when $j<m$ and $u=u'$
  outside $D_m\cup{D_m'}$ by induction on $m$. Interpret
  $D_0=D_0'=\R^n$, $V_0=V_0'=0$. By the Rellich's theorem and unique
  continuation arguments of any of \cite{BPS,PSV,EH1,HSV} we have
  $u=u'$ outside $D_1\cup D_1'$ and so the case $m=1$ is true.

  Let the above claim hold for some fixed $m$. We have $u=u'$ outside
  $D_m\cup D_m'$. Moreover $V_m\neq V_{m-1}=V_{m-1}'\neq V_m'$ so $V$
  and $V'$ have jumps at the vertices of $D_m$ and $D_m'$,
  respectively and are equal outside $D_m\cup{D_m'}$.
  Proposition~\ref{equalCells} implies $D_m=D_m'$ and $V_m=V_m'$. The
  same conclusion holds also when either of them is the empty set
  after which we may stop. By Holmgren's uniqueness theorem (Lemma
  \ref{holmgren}) we have $u=u'$ in $D_m \setminus (D_{m+1} \cup
  D_{m+1}')$ and so anywhere outside $D_{m+1}\cup{D_{m+1}'}$. The
  induction step is complete.
\end{proof}

\section*{Acknowledgement}

The work of H Liu was supported by a startup fund from City University of Hong Kong and the Hong Kong RGC general research funds (projects
12302017, 12301218, 12302919). The authors would like to thank the anonymous referee for the constructive comments and suggestions, which have lead to significant improvements on the presentation and the result of the paper. 

\addcontentsline{toc}{section}{Bibliography}

\end{document}